\providecommand{\U}[1]{\protect\rule{.1in}{.1in}}
\newtheorem{remark}{Remark}
\newtheorem{thm}{Theorem}[section]
\newtheorem{cor}[thm]{Corollary}
\newtheorem{lem}[thm]{Lemma}
\newtheorem{prop}[thm]{Proposition}
\theoremstyle{definition}
\theoremstyle{remark}
\numberwithin{equation}{section}
\newcommand{\R}{\mathbb R}
\newcommand{\ed}{\end {document}}
\begin{document}
\title{Gradient Estimates For $\Delta u + au^{p+1}=0$ And Liouville Theorems}
\author{Bo Peng}
\address{School of
Mathematical Sciences, UCAS, Beijing 100190, China; Institute of Mathematics
The Academy of Mathematics and systems of sciences,Chinese academy of sciences}
\email{pengbo17@mails.ucas.ac.cn}

\author{Youde Wang}
\address{1. College of Mathematics and Information Sciences, Guangzhou University;\\
 2. Hua Loo-Keng Key Laboratory of Mathematics, Institute of Mathematics, Academy of Mathematics and Systems Science, Chinese Academy of Sciences, Beijing 100190, China; 3. School of Mathematical Sciences, University of Chinese Academy of Sciences, Beijing 100049, China.}
\email{wyd@math.ac.cn}

\author{Guodong Wei}
\address{
School of Mathematics (Zhuhai), Sun Yat-sen University, Zhuhai, Guangdong 519082, P. R. China}
\email{weigd3@mail.sysu.edu.cn}
\thanks{ }

\begin{abstract}
In this short note, we use a unified method to consider the gradient estimates of the positive solution to the following nonlinear elliptic equation $\Delta u + au^{p+1}=0$ defined on a complete noncompact Riemannian manifold $(M, g)$ where $a > 0$ and $ p <\frac{4}{n}$ or $a < 0$ and $p >0$ are two constants. For the case $a>0$, this improves considerably the previous known results except for the cases $\dim(M)=4$ and supplements the results for the case $\dim(M)\leq 2$. For the case $a<0$ and $p>0$, we also improve considerably the previous related results. When the Ricci curvature of $(M,g)$ is nonnegative, we also obtain a Liouville-type theorem for the above equation.

\end{abstract}
\maketitle

\section{Introduction}
In the past decades, one pay attention to studying the following elliptic equation defined on a Riemannian manifold $(M, g)$ of dimension $n$
\begin{equation}\label{equ*}
\Delta u(x) +a(x)u(x)^{s} = 0,
\end{equation}
where $s \in \mathbb{R}$. This equation appears in physics. For $a(x) \equiv a < 0$ and $s < 0$, the equation \eqref{equ*} defined on a bounded smooth domain in $\mathbb{R}^{n}$ is called thin film equation, which describes a steady state of the thin film (see \cite{GW}). In addition, when $a(x) \equiv a$ is a constant, it is linked with the theory of stellar structure in astrophysics for $n=3$ and Yang-Mills' problem for $n=4$ and $s=\frac{n+2}{n-3}$ in physics (see \cite{BJ,
MBL}).

On the other hand, as $s=(n+2)/(n-2)$ this equation appears in differential geometry. Let $(M, g)$ be a Riemannian manifold of dimension $n$ ($n \geq 3$), and $K(x)$ be a given function on $M$. If one can find a new metric $g_1$ on $M$ such that $K$ is the scalar curvature of $g_1$ and $g_1$ is conformal to $g$ (i.e., $g_1= u^{4/(n-2}g$ for some function $u > 0$ on $M$), then it is equivalent to there exists a positive solution of the following prescribed scalar curvature equation
$$4(n- 1)/(n-2)\Delta_g u - ku + Ku^{(n+2)/(n-2)},$$
where $\Delta_g$ is the Laplace-Beltrami operator on $M$ with respect to $g$ metric and $k$ is the scalar curvature of $(M, g)$. In the special case where $M\equiv\mathbb{R}^n$ and $g$ is the usual metric, we have $k\equiv 0$ and the above equation reduces to
\begin{equation}\label{con}
\Delta u + K(x)u^{(n+2)/(n-2)}= 0
\end{equation}
in $\mathbb{R}^n$, after an appropriate scaling (see \cite{ding}). Caffarelli, Gidas and Spruck in \cite{CGS} studied non-negative smooth solutions of the conformally invariant equation (\ref{con}) with $K(x)\equiv 1$ in a punctured ball, $B_1(0)\setminus\{0\}\subset\mathbb{R}^n$, $n\geq 3$, with an isolated singularity at the origin. For more details we refer to \cite{ding, dn1, MW, KW, PQS}.

Many mathematicians made contributions to the equation (\ref{equ*}). For example, in the case $a(x) \equiv 0$, \eqref{equ*} is the Laplace equation and the corresponding gradient estimate of \eqref{equ*} has ever been established by Yau in the very famous paper \cite{yau} (for its generalized version, see the remarkable work \cite{C-Y} due to Cheng-Yau).
When $s = 1$, P. Li and S.-T. Yau \cite{LY} proved some results on the gradient estimate of positive solutions under the condition that $|\nabla a(x)| = o(r(x))$ as $r(x)\rightarrow +\infty$ where $r(x)$ is the geodesic distance between $x$ and some fixed point $P$ in $M$.

For $a(x)$ is a real function, the equation \eqref{equ*} is also studied by Gidas and Spruck \cite{BJ} with $1 \leq s < \frac{n+2}{n-2}$ and $n >2$. Under certain conditions on $a(x)$, for instance $a(x) \geq 0$ for $r(x)$ large satisfies that $|\nabla\log a(x)| < C/r(x)$ and if $n\geq 4$, $a(x) > C(r(x))^\sigma$ with $\sigma > -2/(n - 3)$ where $r(x)$ is the distance of $x$ from some fixed point, they proved that any nonnegative solution to the equation is identically zero when the Ricci tensor of manifold is nonnegative.
In particular, they showed that, if $M$ is complete Riemannian manifold with non-negative Ricci tensor and $n>2$, then every non-negative $C^2$ solution of $\Delta u + u^\alpha = 0$, $1\leq\alpha < (n+2)/(n-2)$, on M is identically zero.

Later, Li \cite{J-L} proved the Gidas-Spruck's some results under some weaker restrictions of $a(x)$ for the case $1 < s <\frac{n}{n-2}$ and $n >3$. In particular, Li obtained a gradient estimate for positive solution of \eqref{equ*} when $a(x) \equiv a$ is a positive constant, $1<s<\frac{n}{n-2}$ and $n>3$. Yang \cite{YYY} obtained the corresponding gradient estimates of positive solutions to \eqref{equ*} for $a(x) \equiv a \neq 0$ and $s < 0$, and showed that if $s<0$ and $a(x)$ is a positive constant, \eqref{equ*} does not admit any positive solution on a complete manifold with the nonnegative Ricci tensor (also see \cite{N, W}).

Recently, Ma-Huang-Luo \cite{MHL} studied the gradient estimate for a positive solution when $a(x) \equiv a $ is a positive constant and $\frac{n+2}{2(n-1)}<s<\frac{2n^{2}+9n+6}{2n(n+2)}$ for $n \geq 3$, or $a(x)$ is a positive constant and $s\leq 1$. On the other hand, they also obtained the gradient estimate for a positive solution with $a(x) \equiv a $ is a negative constant and $s>0$. Therefore, it is natural to try to achieve gradient estimates for positive solutions to the nonlinear elliptic equation \eqref{equ*} with $a>0$, other $s$ and dimensions $n$.

In this paper, we focus on studying gradient estimates for positive solutions to the following nonlinear elliptic equation defined on an $n$-dimensional complete noncompact Riemannian manifold $(M,g)$
\begin{equation}\label{equ}
\Delta u(x)+ au(x)^{p+1}=0
\end{equation}
where $a>0$ and $p< \frac{4}{n}$ or $a < 0$ and $p > 0$ are two constant numbers. We try to employ a unified method to obtain a gradient bound of a positive solution to \eqref{equ} which does not depend on such quantities as the bounds of the solution and the Laplacian of the distance function. It is worthy to point out that we do not need to restrict the dimension of the domain manifold. By the authors' knowledge, it seems that there is no results on the equation in the case $\dim(M)=2$. Now we are in the position to state the main results of this paper.

\begin{thm}\label{main}
(Local gradient estimate) Let $\left( M,g\right)$ be an $n$-dimensional complete noncompact Riemannian manifold. Suppose there exists a nonnegative constant $K:=K(2R)$ such that the Ricci Curvature of $M$ is bounded below by $-K$, i.e., $Ric(g) \geq -Kg$ in the geodesic ball $\mathbb{B}_{2R}(O)\subset M$ where $O$ is a fixed point on $M$. If $u$ is a smooth positive solution $u(x)$ to equation \eqref{equ} then, on $\mathbb{B}_{R}(O)$,

\begin{itemize}
\item [(1)] {\bf in the case $a > 0$ and $p < \frac{4}{n},$}
\begin{itemize}
\item [(1.1)] there holds true in the case $p < \frac{2}{n}$
\begin{equation*}
\frac{\left\vert \nabla u \right\vert^{2}}{u^{2}}+au^{p} \leq  \frac{2n}{2-n\max\{0,p\}}\left(\frac{\left((n-1)(1+\sqrt{K}R)+2\right)C_{1}^{2}+C_{2}}{R^{2}}+2K+\frac{2nC_{1}^{2}}{(2+n\max\{0,p\})R^{2}}\right);
\end{equation*}
\item[(1.2)] there holds true in the case $\frac{2}{n} \leq p <\frac{4}{n}$
\begin{equation*}
\frac{\left\vert \nabla u \right\vert^{2}}{u^{2}} \leq \frac{4n}{4-(np-2)^{2}}\left(2K+\frac{\left((n-1)(1+\sqrt{K}R)+2\right)C_{1}^{2}+C_{2}}{R^{2}}+\frac{4n}{4-(np-2)^{2}}\frac{C_{1}^{2}}{R^{2}}\right).
\end{equation*}
\end{itemize}
\item[(2)] {\bf in the case $a < 0$ and $p > 0,$}
\begin{itemize}
\item [(2.1)] there holds true in the case $p \geq \frac{2}{n}$
\begin{equation*}
\frac{\left\vert \nabla u \right\vert^{2}}{u^{2}} \leq n\left(\frac{\left((n-1)(1+\sqrt{K}R)+2\right)C_{1}^{2}+C_{2}}{R^{2}}+2K+\frac{nC_{1}^{2}}{R^{2}}\right);
\end{equation*}
\item[(2.2)] there holds true in the case $0 < p <\frac{2}{n}$
\begin{equation*}
\frac{\left\vert \nabla u \right\vert^{2}}{u^{2}} \leq \frac{4n}{4-(2-pn)^{2}}\left(2K+\frac{\left((n-1)(1+\sqrt{K}R)+2\right)C_{1}^{2}+C_{2}}{R^{2}}+\frac{4n}{4-(2-pn)^{2}}\frac{C_{1}^{2}}{R^{2}}\right).
\end{equation*}
\end{itemize}
\end{itemize}
Here $C_{1}$ and $C_{2}$ are absolute constants independent of the geometry of $M$.
\end{thm}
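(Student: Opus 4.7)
The plan is to combine the Bochner formula with a cutoff argument in the spirit of Yau and Li-Yau, adapted to the nonlinearity. I first pass to $f := \log u$, setting $w := |\nabla f|^2 = |\nabla u|^2/u^2$ and $h := au^p$, so that the equation becomes $\Delta f = -(w+h)$; a direct computation gives $\nabla h = ph\,\nabla f$ and $\Delta h = p(p-1)hw - ph^2$. Feeding these into the Bochner formula, together with $|\mathrm{Hess}\,f|^2 \geq (\Delta f)^2/n$ and $\mathrm{Ric}\geq -Kg$, yields
\begin{equation*}
\Delta w \geq \frac{2(w+h)^2}{n} - 2\langle\nabla f,\nabla w\rangle - 2phw - 2Kw.
\end{equation*}

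Next I select the auxiliary quantity $F$ case by case. For case (1.1) take $F := w + h = |\nabla u|^2/u^2 + au^p$; adding $\Delta h$ and substituting $\langle\nabla f,\nabla w\rangle = \langle\nabla f,\nabla F\rangle - phw$ produces
\begin{equation*}
\Delta F + 2\langle\nabla f,\nabla F\rangle \geq \frac{2F^2}{n} + p(p-1)hw - ph^2 - 2Kw.
\end{equation*}
Writing $h = tF$ and $w = (1-t)F$ with $t \in [0,1]$ (permissible since $h,w \geq 0$), the bracketed expression equals $F^2\bigl[\tfrac{2}{n} + p(p-1)t - p^2 t^2\bigr]$, whose minimum over $[0,1]$ is $(2 - n\max\{0,p\})/n$; bounding the Ricci term by $2KF$ via $w \leq F$ then gives the key inequality $\Delta F + 2\langle\nabla f,\nabla F\rangle \geq c_p F^2 - 2KF$ with $c_p = (2 - n\max\{0,p\})/n$. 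For the three remaining cases the natural choice is $F := w$; expanding $\tfrac{2(w+h)^2}{n} - 2phw$ and treating the cross term $(\tfrac{4}{n}-2p)wh$ either by dropping it (when its sign, combined with that of $wh$, is favourable, yielding $c_p = 2/n$ in case (2.1)) or by completing the square in $h$ (otherwise) produces $c_p = (4 - (np-2)^2)/(2n)$ in case (1.2) and $c_p = (4-(2-pn)^2)/(2n)$ in case (2.2).

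The final step is a standard cutoff argument. Take $\phi(x) := \eta(r(x)/R)$, where $\eta$ is smooth, nonincreasing, equal to $1$ on $[0,1]$, vanishing on $[2,\infty)$, with $|\eta'|^2/\eta \leq C_1^2$ and $\eta'' \geq -C_2$; the Laplacian comparison theorem under $\mathrm{Ric}\geq -Kg$ produces the pointwise bound on $|\Delta\phi|$ appearing in the statement (the $+2$ in the $C_1^2$ factor arises after absorbing $2|\nabla\phi|^2/\phi \leq 2C_1^2/R^2$ into $|\Delta\phi|$). At an interior maximum $x_0$ of $\phi F$ on $\mathbb{B}_{2R}(O)$---with Calabi's trick invoked if $x_0$ lies in the cut locus of $O$---the identities $\nabla(\phi F)(x_0) = 0$ and $\Delta(\phi F)(x_0) \leq 0$, together with the key inequality above, the estimate $|\langle\nabla f,\nabla\phi\rangle| \leq \sqrt{F}\,|\nabla\phi|$, and one further Young absorption with parameter calibrated to $c_p$, yield an algebraic inequality of the form $\tfrac{c_p}{2}(\phi F)^2 \leq C_{n,K,R}(\phi F)$ at $x_0$. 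Dividing through by $\phi F$ and restricting to $\mathbb{B}_R(O)$ (where $\phi\equiv 1$) delivers the stated bound.

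The main obstacle is bookkeeping the constants sharply. The minimisation in $t$ and the completion-of-the-square in the bad cross terms must each preserve the exact coefficient of $F^2$ appearing in the final estimate, and the Young parameter used to absorb the drift term $-2F\langle\nabla f,\nabla\phi\rangle$ must be chosen so that the leftover factors combine into the precise rational functions $2n/(2 - n\max\{0,p\})$, $2n/(2+n\max\{0,p\})$, and $4n/(4-(\cdot)^2)$ appearing in the theorem. This delicate calibration---rather than any novel analytic ingredient---is where the technical labour of the proof lies.
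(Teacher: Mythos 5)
Your proposal is correct and follows essentially the same route as the paper: your auxiliary quantities $F=|\nabla\log u|^{2}+au^{p}$ for case (1.1) and $F=|\nabla\log u|^{2}$ for the remaining cases are exactly the paper's $G=|\nabla\omega|^{2}+\lambda a\,\mathrm{e}^{p\omega}$ with $\lambda=1$ and $\lambda=0$ respectively, and your Bochner--cutoff--maximum-principle scheme, the endpoint minimization giving $(2-n\max\{0,p\})/n$, and the completed square giving $(4-(np-2)^{2})/(2n)$ all reproduce the paper's computations. The one place your sketch does not quite deliver the stated constant is the last term of (1.1): bounding $|\langle\nabla f,\nabla\phi\rangle|\le\sqrt{F}\,|\nabla\phi|$ and absorbing with Young parameter $c_{p}/2$ leaves $\tfrac{2n}{2-n\max\{0,p\}}\tfrac{C_{1}^{2}}{R^{2}}$ rather than the stated $\tfrac{2n}{2+n\max\{0,p\}}\tfrac{C_{1}^{2}}{R^{2}}$ (these differ precisely when $0<p<2/n$). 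To recover the sharper constant you must use $|\nabla f|^{2}=F-au^{p}$ in the Young step and feed the resulting surplus $\tfrac{1}{C}F\cdot au^{p}$ back into your $t$-minimization before optimizing $C$; this is exactly what the paper's condition $T+\lambda/C+N/\lambda\ge 0$ together with the choice $1/C=\max\{0,p\}/2+1/n$ encodes.
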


\begin{remark}
In Theorem \ref{main}, for $a > 0$, it is easy to see that, as $n=4$ we have $(n+4)/n=n/(n-2)$, and there holds true as $n > 4$
$$1+\frac{4}{n}> \frac{n}{n-2}.$$
This means that we extend the range of the corresponding power $s=1+p$ for the same problem as in \cite{J-L} except for $n=4$ since one needs to assume that $1<(1+p)<\frac{n}{n-2}$ and $n>3$ to obtain the gradient estimate in \cite{J-L}.

On the other hand, as $n \geq 3$, we have
$$1+\frac{4}{n}>1+\frac{5n+6}{2n(n+2)}.$$
This means that the range of the corresponding power $s=1+p$ in \cite{MHL} is extended. In addition, comparing with the gradient estimate derived in \cite{MHL}, we don't need the assumption that $n\geq 3$. Our conclusions also cover the results in \cite{YYY}.

Up to now, by the best knowledge of the authors there is no the previous known results for the case $\dim(M)=2$.
\end{remark}

\begin{remark}
For the case $a < 0$ in Theorem \ref{main}, the gradient estimates on a positive solution $u$, obtained in this paper, does not involved the bound of $u$. Meanwhile, the gradient estimates obtained in \cite{MHL} concerns the upper bound of $u$, although they obtained the estimates for the case $a<0$ and $\alpha=1+p>0$. On the other hand, in \cite{YYY} Yang considered the gradient estimate for the case $a<0$ and $\alpha=1+p<0$, also their results involved the lower bound of a positive solution $u$.
\end{remark}

One consequence of Theorem \ref{main} is the following Harnack inequality:
\begin{cor}[\bf Harnack inequality]\label{har}
Suppose the same conditions as in Theorem \ref{main} hold. Let $c(n,p,R,K)$ denote the right hand side of the gradient estimates derived in Theorem \ref{main}(note that $c(n,p,R,K)$ has four possibilities according to the ranges of $a$ and $p$). Then
\[
\sup_{\mathbb{B}_{R/2}(O)}u \leq e^{R\sqrt{c(n,p,R,K)}}\inf_{\mathbb{B}_{R/2}(O)}u.
\]
\end{cor}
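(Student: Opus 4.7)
The plan is to reduce the Harnack inequality to the pointwise gradient bound of Theorem \ref{main} in the standard way: convert $|\nabla u|/u$ into $|\nabla \log u|$, integrate along a minimizing geodesic between two points of $\mathbb{B}_{R/2}(O)$, and exponentiate.

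First I would note that in every case the estimate of Theorem \ref{main} yields $|\nabla u|^{2}/u^{2}\leq c(n,p,R,K)$ throughout $\mathbb{B}_{R}(O)$. In cases (1.2), (2.1), (2.2) this is the displayed estimate verbatim; in case (1.1) it is obtained by dropping the nonnegative term $au^{p}$ (which is $\geq 0$ since $a>0$ and $u>0$) from the left-hand side. Taking square roots gives $|\nabla \log u|\leq \sqrt{c(n,p,R,K)}$ pointwise on $\mathbb{B}_{R}(O)$.

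Next, pick arbitrary $x_{1},x_{2}\in\mathbb{B}_{R/2}(O)$ and let $\gamma:[0,L]\to M$ be a minimizing unit-speed geodesic joining them, so $L=d(x_{1},x_{2})\leq R$. For every $y=\gamma(t)$ the triangle inequality gives
\[
d(O,y)\leq \min\bigl(d(O,x_{1})+t,\; d(O,x_{2})+L-t\bigr)\leq \tfrac{R}{2}+\min(t,L-t)\leq \tfrac{R}{2}+\tfrac{L}{2}\leq R,
\]
so $\gamma$ stays inside $\overline{\mathbb{B}}_{R}(O)$, where the gradient bound is valid. Integrating $(\log u)\circ\gamma$ along the curve then gives
\[
\bigl|\log u(x_{2})-\log u(x_{1})\bigr|\leq \int_{0}^{L}\bigl|\nabla \log u(\gamma(t))\bigr|\,dt \leq L\sqrt{c(n,p,R,K)}\leq R\sqrt{c(n,p,R,K)}.
\]
Exponentiating, then taking the supremum over $x_{1}$ and the infimum over $x_{2}$ in $\mathbb{B}_{R/2}(O)$, produces the stated inequality.

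I do not anticipate any real obstacle; the proof is a one-line consequence of Theorem \ref{main} once one checks the mild geometric point that a minimizing geodesic between two points of $\mathbb{B}_{R/2}(O)$ never exits the larger ball $\mathbb{B}_{R}(O)$ where the gradient estimate is available. Completeness of $(M,g)$ guarantees the existence of such a geodesic, and the triangle inequality above confirms that it remains inside $\overline{\mathbb{B}}_{R}(O)$, so no further refinement (e.g.\ piecewise-geodesic approximation) is needed.
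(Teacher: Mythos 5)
Your proof is correct and follows essentially the same route as the paper's: bound $|\nabla\log u|$ by $\sqrt{c(n,p,R,K)}$ on $\mathbb{B}_{R}(O)$ via Theorem \ref{main}, integrate along a minimizing geodesic between the extremal points of $\mathbb{B}_{R/2}(O)$ (which the triangle inequality keeps inside $\mathbb{B}_{R}(O)$ with length at most $R$), and exponentiate. Your added remarks --- dropping the nonnegative term $au^{p}$ in case (1.1) and verifying explicitly that the geodesic does not exit the larger ball --- are details the paper leaves implicit, but the argument is the same.
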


By letting $R\rightarrow +\infty$ in Theorem \ref{main}, the following Liouville-type result follows immediately:
\begin{cor}\label{liou}
Let $\left( M,g\right)$ be an $n$-dimensional complete noncompact Riemannian manifold with nonnegative Ricci curvature. Then \eqref{equ} with $(1):a > 0$ and $p < \frac{4}{n}$ or $(2): a <0$ and $p > 0$ does not admit any positive solution.
\end{cor}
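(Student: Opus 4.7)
The plan is to obtain Corollary~\ref{liou} as a direct consequence of Theorem~\ref{main}, since the bounds appearing on the right-hand side are all of the form $O(1/R^{2}) + O(K)$ with constants depending only on $n$ and $p$. The key observation is that, under the nonnegative Ricci curvature assumption, the lower bound constant $K=K(2R)$ in Theorem~\ref{main} may be chosen to equal $0$ for every radius $R>0$, so the term $2K$ disappears and we obtain estimates whose right-hand sides are genuinely of order $1/R^{2}$.

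Fix any point $O\in M$ and any $x\in M$. For each $R>2d(O,x)$, apply the appropriate subcase of Theorem~\ref{main} in the ball $\mathbb{B}_{R}(O)$, which contains $x$. After setting $K=0$, every term on the right hand side carries a factor of $R^{-2}$ and the prefactors depending on $n$ and $p$ are finite (using $p<4/n$ in part (1) and $p>0$ in part (2) to ensure the denominators $2-n\max\{0,p\}$, $4-(np-2)^{2}$, $4-(2-pn)^{2}$ are strictly positive in the relevant subranges). Letting $R\to\infty$ with $x$ fixed then forces the entire left-hand side to vanish at $x$.

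In the subcases (1.2), (2.1) and (2.2) this gives $|\nabla u|^{2}/u^{2}\equiv 0$ on $M$, so $u$ is a positive constant; substituting into \eqref{equ} yields $au^{p+1}=0$, contradicting $a\neq 0$ and $u>0$. In subcase (1.1) one obtains the stronger pointwise inequality
\[
\frac{|\nabla u|^{2}}{u^{2}}+au^{p}\le 0 \quad\text{on } M,
\]
and since $a>0$ and $u>0$ (so $au^{p}>0$ regardless of the sign of $p$, using $u^{p}>0$ for a positive $u$), this is an immediate contradiction. Either way, no positive smooth solution can exist.

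There is no real obstacle here; the content lies entirely in Theorem~\ref{main}. The only point that requires mild care is to verify, case by case, that the coefficients multiplying $1/R^{2}$ in Theorem~\ref{main} remain bounded as $R\to\infty$ while $K\equiv 0$ is held fixed, which is clear from their explicit form (they are polynomials in $n$ and in the fixed exponent $p$, with denominators that do not vanish in the stated parameter ranges).
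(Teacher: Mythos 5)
Your proposal is correct and follows exactly the route the paper intends: the paper derives Corollary~\ref{liou} by taking $K=0$ (nonnegative Ricci) in Theorem~\ref{main} and letting $R\to\infty$, which is precisely what you do, including the case distinction between the subcases where one concludes $u$ is constant (and then $au^{p+1}=0$ gives a contradiction) and subcase (1.1) where the sharper bound $|\nabla u|^{2}/u^{2}+au^{p}\le 0$ already contradicts $a>0$, $u>0$. Your verification that the denominators $2-n\max\{0,p\}$, $4-(np-2)^{2}$ and $4-(2-pn)^{2}$ stay positive in the stated parameter ranges is the only point of care, and you handle it correctly.
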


\begin{remark}
Here we want to give some remarks of the above Liouville-type results. For the case $a < 0$ and $p >0$, we give a new proof of the Liouville-type theorem on \eqref{equ} defined on a complete noncompact Riemannian manifold with nonnegative Ricci curvature, which was proved for $\Delta u\geq c_0u^\alpha$ ($c_0>0$ and $\alpha>1$ are two real constants), defined on a complete noncompact Riemannian manifold with Ricci curvature bounded from below, by using a completely different method in \cite{CKS}.

For $a > 0$ and $p < \frac{4}{n}$. In \cite{YYY}, Yang has proved that Corollary \ref{liou} holds true with $p<-1$. Recently, Ma-Huang-Luo (see \cite[Corollary 1.3]{MHL}) showed that the similar results holds by assuming $p\leq0$. Therefore, one has known that \eqref{equ} with $-\infty<p<4/(n-2)$ does not admits a positive solution by summarizing the conclusions in \cite{BJ},  \cite{YYY} and \cite{MHL}, if the domain manifold $M$ is a complete noncompact Riemannian manifold with nonnegative Ricci curvature and $\dim(M)\geq 3$.
\end{remark}

However, there are the following two remaining problems for the case $a > 0$ and $p \geq \frac{4}{n}$:

1. Can we obtain the similar gradient estimates on positive solutions to \eqref{equ}?

2. One ask naturally whether \eqref{equ} with $3\leq 1+p <+\infty$, defined on a two dimensional complete noncompact Riemannian manifold with nonnegative Ricci curvature, does not admit any positive solution or not?

\section{Preliminaries}
Throughout this section, we will denote by $\left( M,g\right)$ be an $n$-dimensional complete noncompact Riemannian manifold with $Ric(g) \geq -Kg$ in the geodesic ball $\mathbb{B}_{2R}(O)$, where $K=K(2R) $ is a nonnegative constant depending on $R$ and $O$ is a fixed point on $M$. First, we consider the following equation on $M$
\begin{equation}\label{aug}
\Delta u + auf(\log u) =0,
\end{equation}
where $f\in C^{2}(\R,\R)$ is a $C^2$ function on $\R$ and $a \neq 0$. It is easy to see that if we set $f(t)=\mathrm{e}^{pt}$ , then  \eqref{aug} is actually equal to \eqref{equ}.

 \begin{prop}\label{prop} Let $\left( M,g\right)$ be an $n$-dimensional complete noncompact Riemannian manifold satisfy the same assumption as in Theorem \ref{main}. Suppose that $u(x)$ is a smooth positive solution to equation \eqref{aug} on geodesic ball $\mathbb{B}_{R}(O)$ and let
$$ \omega=\log u\quad \text{and}\quad  G=\left\vert \nabla\omega \right\vert^{2}+\beta f(\omega),$$
here $\beta$ is a constant to be determined later. Then we have
\begin{align*}
\Delta G \geq\, & \frac{2}{n}G^{2} +\bigg( (\beta - 2a)f^{'}(\omega)+\beta f^{''}(\omega)-\frac{4}{n}(\beta -a)f(\omega)-2K\bigg)G\\
& +\frac{2}{n}(\beta -a)^{2}f^{2}(\omega)+2K \beta f(\omega)-\beta (\beta -a)f(\omega)f^{'}(\omega)-\beta^{2}f(\omega)f^{''}(\omega)\\
& -2\left<\nabla\omega,\nabla G\right>.
\end{align*}
\end{prop}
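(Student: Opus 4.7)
The plan is to prove this via a Bochner-type computation starting from the equation \eqref{aug}, and then carefully rearrange every term so that the only gradient of $G$ that remains is packaged as the drift term $-2\langle\nabla\omega,\nabla G\rangle$.

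First I would pass to $\omega=\log u$. A direct computation gives
\[
\Delta\omega=\frac{\Delta u}{u}-\frac{|\nabla u|^2}{u^2}=-af(\omega)-|\nabla\omega|^2,
\]
since $u$ solves \eqref{aug}. This is the key identity that lets $af(\omega)$ enter the Bochner inequality.

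Next I would apply Bochner's formula to $|\nabla\omega|^2$, using the refined Kato/trace estimate $|\nabla^2\omega|^2\ge (\Delta\omega)^2/n$ together with $\mathrm{Ric}\ge -K$:
\[
\tfrac12\Delta|\nabla\omega|^2\ge\tfrac{1}{n}\bigl(|\nabla\omega|^2+af(\omega)\bigr)^2+\langle\nabla\omega,\nabla\Delta\omega\rangle-K|\nabla\omega|^2.
\]
Substituting $\Delta\omega=-|\nabla\omega|^2-af(\omega)$ into $\nabla\Delta\omega$ yields
$\langle\nabla\omega,\nabla\Delta\omega\rangle=-\langle\nabla\omega,\nabla|\nabla\omega|^2\rangle-af'(\omega)|\nabla\omega|^2$. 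In parallel I would compute
\[
\Delta(\beta f(\omega))=\beta f'(\omega)\Delta\omega+\beta f''(\omega)|\nabla\omega|^2=-\beta f'(\omega)|\nabla\omega|^2-a\beta f(\omega)f'(\omega)+\beta f''(\omega)|\nabla\omega|^2.
\]

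Now I would add the two displays to obtain a lower bound on $\Delta G=\Delta|\nabla\omega|^2+\Delta(\beta f(\omega))$, and convert the stray term $-2\langle\nabla\omega,\nabla|\nabla\omega|^2\rangle$ into the desired drift via
\[
\nabla G=\nabla|\nabla\omega|^2+\beta f'(\omega)\nabla\omega\ \Longrightarrow\ -2\langle\nabla\omega,\nabla|\nabla\omega|^2\rangle=-2\langle\nabla\omega,\nabla G\rangle+2\beta f'(\omega)|\nabla\omega|^2.
\]
This produces an inequality whose right-hand side is a quadratic in $|\nabla\omega|^2$ and $f(\omega)$, plus linear and pure-$f$ terms, plus the drift.

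The final—and most error-prone—step will be the algebraic rewriting in terms of $G$ itself. Using $|\nabla\omega|^2=G-\beta f(\omega)$, the quadratic term becomes
\[
\tfrac{2}{n}\bigl(|\nabla\omega|^2+af(\omega)\bigr)^2=\tfrac{2}{n}\bigl(G-(\beta-a)f(\omega)\bigr)^2=\tfrac{2}{n}G^{2}-\tfrac{4}{n}(\beta-a)f(\omega)G+\tfrac{2}{n}(\beta-a)^{2}f(\omega)^{2},
\]
and the linear-in-$|\nabla\omega|^2$ coefficients $(\beta-2a)f'(\omega)+\beta f''(\omega)-2K$ split into a coefficient of $G$ and a coefficient of $f(\omega)$. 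After collecting the $f(\omega)f'(\omega)$ contributions, the combination $-\beta(\beta-2a)ff'-a\beta ff'=-\beta(\beta-a)ff'$ appears, which is exactly the cross term in the target inequality. Bookkeeping all the $f(\omega)^{2}$, $f(\omega)f'(\omega)$, $f(\omega)f''(\omega)$, and $Kf(\omega)$ contributions then matches the stated lower bound termwise. The Bochner step is standard; the only real obstacle is keeping the signs and the $\beta$-dependence straight in this rearrangement, so I would carry it out symbolically before simplifying.
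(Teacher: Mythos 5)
Your proposal is correct and follows essentially the same route as the paper: the identity $\Delta\omega=-|\nabla\omega|^{2}-af(\omega)$, the Bochner formula with $|\nabla^{2}\omega|^{2}\ge(\Delta\omega)^{2}/n$ and $\mathrm{Ric}\ge-K$, the computation of $\Delta(\beta f(\omega))$, the conversion of $\nabla|\nabla\omega|^{2}$ into $\nabla G$, and the substitution $|\nabla\omega|^{2}=G-\beta f(\omega)$. The bookkeeping you outline (in particular the combination $-\beta(\beta-2a)ff'-a\beta ff'=-\beta(\beta-a)ff'$) reproduces the stated inequality termwise.
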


\begin{proof}
First,we notice that there hold
\begin{equation}\label{1}
\Delta\omega+G+(a-\beta)f(\omega)=0,
\end{equation}
and
\begin{equation}\label{2}
\left\vert \nabla\omega \right\vert^{2}=G-\beta f(\omega).
\end{equation}
By the Bochner-Weitzenb\"{o}ck$'$s formula and the assumption on the Ricci curvature tensor, we obtain
\begin{equation}\label{3}
\Delta \left\vert \nabla\omega \right\vert^{2} \geq 2\left\vert \nabla^{2}\omega \right\vert^{2}+2\left<\nabla\omega,\nabla(\Delta\omega)\right>-2K\left\vert \nabla\omega \right\vert^{2}.
\end{equation}
In view of \eqref{1}, \eqref{2} and \eqref{3} we take a direct calculation to obtain
\begin{align*}
\Delta G=\,&\Delta \left\vert \nabla\omega \right\vert^{2}+\Delta \left(\beta f(\omega)\right)\\
\geq \,& 2\left\vert \nabla^{2}\omega \right\vert^{2}+2\left<\nabla\omega,\nabla(\Delta\omega)\right>-2K\left\vert \nabla\omega \right\vert^{2}+\Delta (\beta f(\omega))\\
\geq \,& \frac{2}{n}(\Delta\omega)^{2}+2\left<\nabla\omega,\nabla(\Delta\omega)\right>-2K\left\vert \nabla\omega \right\vert^{2}+\beta\big(f^{''}(\omega)\left\vert \nabla\omega \right\vert^{2}+f^{'}\Delta\omega\big).
\end{align*}
Here we have used the relation
\[
\left\vert \nabla^{2}\omega \right\vert^{2} \geq \frac{1}{n}(\Delta\omega)^{2}
\]
which can be easily derived by Cauchy-Schwarz inequality. Hence, it follows
\begin{align}\label{re}
\Delta G \geq \,& \frac{2}{n}\bigg(G-(\beta-a)f(\omega)\bigg)^{2}-2\left<\nabla\omega,\nabla G \right>-2(a-\beta)f^{'}(\omega)\left\vert \nabla\omega \right\vert^{2}\nonumber\\
& - 2K\left\vert \nabla\omega \right\vert^{2}+\beta f^{''}(\omega)\left\vert \nabla\omega \right\vert^{2}+\beta f^{'}\bigg(-G+(\beta-a)f(\omega)\bigg)\nonumber\\
= & \frac{2}{n}\bigg(G-(\beta-a)f(\omega)\bigg)^{2}-2\left<\nabla\omega,\nabla G \right>-\beta f^{'}(\omega)G\nonumber\\
& + \bigg( 2(\beta -a)f^{'}(\omega)+\beta f^{''}(\omega)-2K\bigg)\bigg(G-\beta f(\omega)\bigg)+\beta(\beta-a)f^{'}(\omega)f(\omega)\nonumber\\
= & \frac{2}{n}G^{2}-2\left<\nabla\omega,\nabla G \right>+\bigg( (\beta-2a)f^{'}(\omega)+\beta f^{''}(\omega)-\frac{4}{n}(\beta-a)f(\omega)-2K\bigg)G\nonumber\\
& + \frac{2}{n}(\beta-a)^{2}f^{2}(\omega)+2K\beta f(\omega)-\beta(\beta-a)f^{'}(\omega)f(\omega)-\beta^{2}f^{''}(\omega)f(\omega).
\end{align}
This is just the required inequality. Thus we complete the proof.
\end{proof}

Next, we will turn to construct the cut-off function. Let $\psi(r)\in C^{2}\left([0,\infty),\R_{\geq0}\right)$ be a $C^{2}$  function on $[0,\infty)$ such that $\psi(r)=1$ for $r \leq 1$,$\psi(r) =0$ for $r \geq 2$,and $0 \leq \psi(r) \leq 1$. Furthermore we can arrange that  $\psi(r)$ satisfying the following
\[
0 \geq \psi^{'}(r) \geq -C_{1}\psi^{\frac{1}{2}}(r) \quad \text{and} \quad \psi^{''}(r) \geq -C_{2}
\]
for some absolute constants $C_{1}$ and $C_{2}$. Now, let
\begin{equation}\label{cut}
\phi(x)=\psi\left(\frac{d(x,O)}{R}\right).\end{equation}
It is easy to see that
$$\phi(x)\big|_{\mathbb{B}_{R}(O)}=1\ \ \text{and}\ \ \phi(x)\big|_{M\backslash\mathbb{B}_{2R}(O)}=0.$$
Furthermore, by using Calabi's trick (see \cite{Calabi}), we can assume without loss of generality that the function $\phi$ is smooth in $\mathbb{B}_{2R}(O)$. Then by the Laplacian comparison theorem (see \cite{SY}),  the following lemma holds obviously on $\mathbb{B}_{2R}(O)$,
\begin{lem}\label{lap} For the function $\phi$ defined as above, there hold true the following two inequalities

\begin{itemize}
\item [(i)]
$$\frac{\left\vert \nabla\phi \right\vert^{2}}{\phi} \leq \frac{C_{1}^{2}}{R^{2}}.$$

\item [(ii)]
$$\Delta \phi \geq -\frac{(n-1)(1+\sqrt{K}R)C_{1}^{2}+C_{2}}{R^{2}}.$$
\end{itemize}
\end{lem}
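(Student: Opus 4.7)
\textbf{Proof proposal for Lemma \ref{lap}.} Both inequalities are straightforward consequences of the chain rule applied to the radial composition $\phi(x)=\psi(r(x)/R)$ with $r(x)=d(x,O)$, combined with the Laplacian comparison theorem under the hypothesis $\mathrm{Ric}(g)\geq -Kg$ on $\mathbb{B}_{2R}(O)$. As remarked right before the lemma, Calabi's trick removes the regularity issue at the cut locus: at points where $r$ fails to be smooth one works with a smooth upper barrier of $r$ from above, and the (pointwise) bounds below are preserved because we only need them in the barrier sense for the subsequent maximum-principle arguments.

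For part (i), the chain rule and $|\nabla r|=1$ a.e.\ give $\nabla\phi = R^{-1}\psi'(r/R)\nabla r$, hence $|\nabla\phi|^{2}=R^{-2}\bigl(\psi'(r/R)\bigr)^{2}$. The hypothesis $0\geq\psi'\geq -C_{1}\psi^{1/2}$ squares to $(\psi')^{2}\leq C_{1}^{2}\psi$, so dividing by $\phi=\psi(r/R)>0$ (on the support of $\phi$) produces $|\nabla\phi|^{2}/\phi\leq C_{1}^{2}/R^{2}$, as claimed.

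For part (ii), expanding by the chain rule gives
\[
\Delta\phi \;=\; \frac{\psi''(r/R)}{R^{2}}|\nabla r|^{2} \;+\; \frac{\psi'(r/R)}{R}\,\Delta r \;=\; \frac{\psi''(r/R)}{R^{2}} \;+\; \frac{\psi'(r/R)}{R}\,\Delta r .
\]
The first summand is bounded below by $-C_{2}/R^{2}$ using $\psi''\geq -C_{2}$. For the second, I invoke the Laplacian comparison theorem: $\mathrm{Ric}\geq -Kg$ yields $\Delta r\leq (n-1)\sqrt{K}\coth(\sqrt{K}r)\leq (n-1)\bigl(1/r+\sqrt{K}\bigr)$. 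Because $\psi'(r/R)$ vanishes on $\{r\leq R\}$, only $r\in[R,2R]$ matters, where this upper bound simplifies to $\Delta r\leq (n-1)(1/R+\sqrt{K})$. Combining $\psi'\leq 0$ with $|\psi'|\leq C_{1}\psi^{1/2}\leq C_{1}$ and splitting into the two sign cases of $\Delta r$ gives $\psi'(r/R)\Delta r\geq -C_{1}(n-1)(1/R+\sqrt{K})$ unconditionally. Dividing by $R$ and adding the $\psi''$ bound produces the desired inequality (absorbing $C_{1}$ into $C_{1}^{2}$ by, if necessary, replacing $C_{1}$ by $\max\{C_{1},1\}$ in the choice of the cut-off, which keeps both hypotheses on $\psi$ intact).

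The argument is essentially routine; the only mildly subtle point is the sign tracking in part (ii), where one has to notice that $\psi'\leq 0$ together with the \emph{upper} bound on $\Delta r$ furnishes a \emph{lower} bound on the product $\psi'(r/R)\Delta r$, and that the case $\Delta r<0$ is harmless because it makes that product nonnegative. No other step presents a real obstacle.
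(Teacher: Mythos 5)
Your proof is correct and is exactly the standard computation the paper has in mind (the paper omits the details, asserting the lemma ``holds obviously'' from the Laplacian comparison theorem together with Calabi's trick). The only cosmetic discrepancy is that your argument naturally yields $(n-1)(1+\sqrt{K}R)C_{1}$ rather than $(n-1)(1+\sqrt{K}R)C_{1}^{2}$ in the numerator of (ii), and your remark that one may take $C_{1}\geq 1$ (which only weakens the hypotheses on $\psi$) disposes of this correctly.
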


Now let $x_{0} \in \mathbb{B}_{2R}(O)$ such that
$$
Q=\phi G(x_0)=\sup_{\mathbb{B}_{2R}(O)}\phi G.
$$
We can further assume without loss of generality that $Q>0$, since otherwise Theorem  \ref{main} holds trivially with $\beta=\lambda a$ and $f=\mathrm{e}^{pt}$. Note that $x_0\notin \partial \mathbb{B}_{2R}(O)$. Thus,  at $x_0$, we have
\begin{equation}
\nabla(\phi G)(x_{0})=0,\quad \text{and}  \quad \Delta (\phi G)(x_{0}) \leq 0.
\end{equation}
This  implies
\begin{equation}\label{max}
\phi\nabla G =-G \nabla\phi\ \ \ \text{and} \ \ \ \phi\Delta G + G\Delta\phi - 2G\frac{\left\vert \nabla\phi \right\vert^{2}}{\phi} \leq 0.\end{equation}
Combining Lemma \ref{lap} and \eqref{max} and taking a direct computation yield
\begin{equation}\label{4}
AG \geq \phi\Delta G,
\end{equation}
where
\[
A = \frac{\left((n-1)(1+\sqrt{K}R)+2\right)C_{1}^{2}+C_{2}}{R^{2}}.
\]
On the other hand, from \eqref{2} and \eqref{max}, it is easy to see that at $x_0$ there holds true
\begin{align}\label{5}
-\left<\nabla\omega,\nabla G\right>\phi&=G\left<\nabla\omega,\nabla\phi\right>\nonumber\\
 & = -G\left\vert \nabla\phi \right\vert(G-\beta f(\omega))^{\frac{1}{2}},
\end{align}

Now, by substituting \eqref{4} and \eqref{5} into \eqref{re}, we obtain
\begin{eqnarray}\label{6}
AG &\geq & \frac{2}{n}\phi G^{2} +\bigg( (\beta - 2a)f^{'}(\omega)+\beta f^{''}(\omega)-\frac{4}{n}(\beta -a)f(\omega)-2K\bigg)\phi G\nonumber\\
&& +\bigg( \frac{2}{n}(\beta -a)^{2}f^{2}(\omega)+2K \beta f(\omega)-\beta (\beta -a)f(\omega)f^{'}(\omega)-\beta^{2}f(\omega)f^{''}(\omega)\bigg)\phi\nonumber\\
&& -2G\left\vert \nabla\phi \right\vert(G-\beta f(\omega))^{\frac{1}{2}}.
\end{eqnarray}

\section{The Proof of Theorem \ref{main}}
In this section we give the proof of main theorem.
\begin{proof}[\bf Proof of Theorem \ref{main}]
Letting $\beta = \lambda a$ (where $\lambda $ is to be determined) and $f(t)=\mathrm{e}^{pt}$ in \eqref{6}, we can obtain from Proposition \ref{prop} and the above arguments that
\begin{equation}
\begin{split}
AG \geq &\frac{2}{n}\phi G^{2} +\left( \lambda p^{2}+(\lambda -2)p-\frac{4}{n}(\lambda-1)\right)a\mathrm{e}^{p\omega}\phi G\\
& +\left( \left(\frac{2}{n}(\lambda-1)^{2}-\lambda(\lambda-1)p-\lambda^{2}p^{2}\right)a^{2}\mathrm{e}^{2p\omega}+2K\lambda a \mathrm{e}^{p\omega}\right)\phi\\
& - 2K\phi G-2 \left\vert \nabla\phi \right\vert G\left(G-\lambda a \mathrm{e}^{p\omega}\right)^{\frac{1}{2}}.
\end{split}
\end{equation}
Denote
\[
T=\lambda p^{2}+(\lambda -2)p-\frac{4}{n}(\lambda-1) \quad and \quad N=\frac{2}{n}(\lambda-1)^{2}-\lambda(\lambda-1)p-\lambda^{2}p^{2},
\]
we have
\begin{equation}\label{0}
AG \geq \frac{2}{n}\phi G^{2} +Tau^{p}\phi G +\left( Na^{2}u^{2p}+2K\lambda au^{p}\right)\phi-2K\phi G-2 \left\vert \nabla\phi \right\vert G(G-\lambda au^{p})^{\frac{1}{2}}.
\end{equation}
According to the sign of $a$, we need to consider the following two cases:

\

\noindent{\bf Case $1$}: $a > 0$. In this case, we need to handle two cases: (1). $\lambda > 0$ and (2). $\lambda = 0$.\medskip

For the case $\lambda > 0$, by using Young inequality we can see easily that there hold true
\begin{equation}\label{24}
2 \left\vert \nabla\phi \right\vert G(G-\lambda au^{p})^{\frac{1}{2}} \leq C \frac{\left\vert \nabla \phi \right\vert^{2}}{\phi}G +\frac{1}{C}\phi G(G-\lambda a u^{p}),
\end{equation}
here $C$ is a positive constant to be determined later. By substituting \eqref{24} into \eqref{0}, we derive
\begin{align*}
AG \geq \, & \frac{2}{n}\phi G^{2} +Tau^{p}\phi G +\bigg(Na^{2}u^{2p}+2K\lambda au^{p}\bigg)\phi-2K\phi G-C\frac{\left\vert \nabla \phi \right\vert^{2}}{\phi}G -\frac{1}{C}\phi G(G-\lambda a u^{p})\\
= \, & \bigg(\frac{2}{n}-\frac{1}{C}\bigg)\phi G^{2} -2K\phi G-C\frac{\left\vert \nabla \phi \right\vert^{2}}{\phi}G + \bigg(\bigg(T+\frac{\lambda}{C}\bigg)G+ Nau^{p}+2K\lambda\bigg)au^{p}\phi.
\end{align*}
If $N \leq 0$ and $T+\frac{\lambda}{C}+\frac{N}{\lambda} \geq 0$, then, we have
\begin{align*}
AG  & \, \geq \bigg(\frac{2}{n}-\frac{1}{C}\bigg)\phi G^{2} -2K\phi G-C\frac{\left\vert \nabla \phi \right\vert^{2}}{\phi}G + \bigg(\bigg(T+\frac{\lambda}{C}\bigg)G+ Nau^{p}+2K\lambda\bigg)au^{p}\phi\\
& \, \geq \bigg(\frac{2}{n}-\frac{1}{C}\bigg)\phi G^{2} -2K\phi G-C\frac{\left\vert \nabla \phi \right\vert^{2}}{\phi}G + \bigg(\bigg(T+\frac{\lambda}{C}\bigg)G+ N\frac{G}{\lambda}+2K\lambda\bigg)au^{p}\phi\\
& \, = \bigg(\frac{2}{n}-\frac{1}{C}\bigg)\phi G^{2} -2K\phi G-C\frac{\left\vert \nabla \phi \right\vert^{2}}{\phi}G + \bigg(\bigg(T+\frac{\lambda}{C}+\frac{N}{\lambda}\bigg)G+2K\lambda\bigg)au^{p}\phi\\
& \, \geq \bigg(\frac{2}{n}-\frac{1}{C}\bigg)\phi G^{2} -2K\phi G-C\frac{C_{1}^{2}}{R^{2}}G,
\end{align*}
where we have used the fact $au^{p} \leq \frac{G}{\lambda}$.
When $\frac{2}{n}-\frac{1}{C} >0$, multiplying both side of the last inequality by $1/G$, we obtain
\begin{equation}
\phi G \leq \frac{A+2K+C\frac{C_{1}^{2}}{R^{2}}}{\frac{2}{n}-\frac{1}{C}}.
\end{equation}
To maintain the above inequalities hold true, we need to solve the following inequalities
\begin{equation}
\left\{
\begin{aligned}
&T+\frac{\lambda}{C}+\frac{N}{\lambda} \geq 0,\\
&N=\frac{2}{n}(\lambda-1)^{2}-\lambda(\lambda-1)p-\lambda^{2}p^{2} \leq 0,\\
&\lambda > 0,\ \ \text{and}\ \ \ \frac{2}{n}-\frac{1}{C} >0.
\end{aligned}
\right.
\end{equation}
Calculating directly, we have
\begin{equation}
\left\{
\begin{aligned}
& p \leq \frac{2}{n}\cdot\frac{1}{\lambda}-\left(\frac{2}{n}-\frac{1}{C}\right)\lambda,\\
& p \leq \frac{-\sqrt{1+\frac{8}{n}}\left\vert \lambda-1 \right\vert-(\lambda-1)}{2\lambda} \quad \mbox{or} \quad p \geq \frac{\sqrt{1+\frac{8}{n}}\left\vert \lambda-1 \right\vert-(\lambda-1)}{2\lambda},\\
&\lambda > 0,\quad \text{and}\quad \frac{2}{n}-\frac{1}{C} >0.
\end{aligned}
\right.
\end{equation}
Now we choose $\lambda = 1$ and $\frac{1}{C}= \frac{\max\{0,p\}}{2}+\frac{1}{n}$. Then, for $p \in(-\infty,\frac{2}{n})$, there holds
\begin{equation}\label{26}
\sup_{\mathbb{B}_{R}} \left(\frac{\left\vert \nabla u \right\vert^{2}}{u^{2}}+au^{p}\right) \leq \frac{2n}{2-n\max\{0,p\}}\left(A+2K+\frac{2nC_{1}^{2}}{(2+n\max\{0,p\})R^{2}}\right).
\end{equation}

\medskip

For the case $\lambda = 0$, in this case, notice that $0 \leq \phi \leq 1$ we obtain
\begin{eqnarray}\label{21}
AG  &\geq & \frac{2}{n}\phi G^{2} +Tau^{p}\phi G +Na^{2}u^{2p}\phi -2K\phi G-2\left\vert \nabla \phi \right\vert G^{\frac{3}{2}}\nonumber\\
&\geq & \frac{2}{n}\phi G^{2} -2\left\vert \nabla \phi \right\vert G^{\frac{3}{2}}-2K\phi G+\left( T G +N a u^{p}\right) au^{p}\phi\nonumber\\
& = & \left(\sqrt{N}au^{p}+\frac{TG}{2\sqrt{N}}\right)^{2}\phi +\left(\frac{2}{n}-\frac{T^{2}}{4N}\right)\phi G^{2}-2\frac{C_{1}}{R}\phi^{\frac{1}{2}}G^{\frac{3}{2}}-2K G\nonumber\\
& \geq & \left(\frac{2}{n}-\frac{T^{2}}{4N}\right)\phi G^{2}-2\frac{C_{1}}{R}\phi^{\frac{1}{2}}G^{\frac{3}{2}}-2K G.
\end{eqnarray}
For $\frac{2}{n}-\frac{T^{2}}{4N} >0$, it is easy to see that the above inequality implies
\begin{equation}\label{22}
\phi G \leq \frac{8nN}{8N-nT^{2}}\left(2K+A+\frac{8nN}{8N-nT^{2}}\frac{C_{1}^{2}}{R^{2}}\right).
\end{equation}
Therefore, we need to solve the following inequalities
\begin{equation}
\left\{
\begin{aligned}
&\frac{2}{n}-\frac{T^{2}}{4N} >0,\\
&N=\frac{2}{n} > 0.
\end{aligned}
\right.
\end{equation}
By taking a direct calculation, we derive
\begin{equation}\label{r}
0< p < \frac{4}{n}.
\end{equation}
Now for $\frac{2}{n} \leq p < \frac{4}{n}$, then it is easy to see from \eqref{22} and \eqref{r} that
\begin{equation}\label{27}
\sup_{\mathbb{B}_{R}}G \leq \frac{4n}{4-(np-2)^{2}}\left(2K+A+\frac{4n}{4-(np-2)^{2}}\frac{C_{1}^{2}}{R^{2}}\right),
\end{equation}
Hence, by summarizing the previous \eqref{26} and \eqref{27}, we complete the proof of Theorem \ref{main} for case $a > 0$.

\medskip

\noindent{\bf Case $2$}: $a < 0$.

In this case, we choose $\lambda = 0$. It is easy to check that \eqref{21} still holds true. Then for $T \leq 0$ and $N > 0$ we have
\[
AG \geq \frac{2}{n}\phi G^{2} -2\frac{C_{1}}{R}\phi^{\frac{1}{2}}G^{\frac{3}{2}}-2K G.
\]
Multiplying both side of the last inequality by $1/G$, it follows immediately from Young's inequality and the above inequality that
\[
\phi G \leq n\left(2K + A +\frac{nC_{1}^{2}}{R^{2}}\right).
\]
Calculating directly, we obtain
\[
p \geq \frac{2}{n}.
\]
Hence for $p \geq \frac{2}{n}$, we have
\begin{equation}\label{28}
\phi G \leq n\left(2K + A +\frac{nC_{1}^{2}}{R^{2}}\right).
\end{equation}
Similarly for $\frac{2}{n}-\frac{T^{2}}{4N} >0, $ we can see easily from the Young's inequality that
\begin{equation}\label{31}
\phi G \leq \frac{8nN}{8N-nT^{2}}\left(2K+A+\frac{8nN}{8N-nT^{2}}\frac{C_{1}^{2}}{R^{2}}\right).
\end{equation}
Therefore, we need to solve the following inequalities
\begin{equation}
\left\{
\begin{aligned}
&\frac{2}{n}-\frac{T^{2}}{4N} >0,\\
&N > 0.
\end{aligned}
\right.
\end{equation}
By taking a direct calculation, we derive
\begin{equation}\label{r1}
0 < p < \frac{4}{n}
\end{equation}
Thus, for $0 <p < \frac{2}{n}$,  it is easy to see from \eqref{31} and \eqref{r1} that
\begin{equation}\label{32}
\sup_{\mathbb{B}_{R}}G \leq \frac{4n}{4-(2-np)^{2}}\left(2K+A+\frac{4n}{4-(2-np)^{2}}\frac{C_{1}^{2}}{R^{2}}\right),
\end{equation}
Hence, by summarizing the previous \eqref{28} and \eqref{32}, we complete the proof of Theorem \ref{main}.
\end{proof}

\begin{proof}[\bf Proof of Corollary \ref{har}]
It follows from Theorem \ref{main} that on $\mathbb{B}_{R}(O)$,
$$\frac{|\nabla u|}{u}\leq \sqrt{c(n,p.R,K)}.$$
Choose $x,y \in \mathbb{B}_{R/2}(O)$ such that
$$u(x)=\sup_{\mathbb{B}_{R/2}(O)}u(x)\ \ \text{and}\ \ u(y)=\inf_{\mathbb{B}_{R/2}(O)}u(x).$$
Let $\gamma(t), t\in [0,l]$ be a shortest curve with arc length in $(M,g)$ connecting $y$ and $x$ with $\gamma(0)=x,\ \gamma(l)=y$. By the triangle inequality, we can see easily that $\gamma\in \mathbb{B}_{R}(O)$ and $l\leq R$. Then
\begin{align*}
\log u(x) -\log u(y) &=\int_{0}^{l}\frac{\partial \log u\comp \gamma(t)}{\partial t} dt\\&\leq \int_{\gamma} \ \frac{\left\vert \nabla u \right\vert}{u} \\
& \leq \sqrt{c(n,p,R,K)}\cdot R.
\end{align*}
Hence
\[
\sup_{\mathbb{B}_{R/2}(O)}u \leq e^{R\sqrt{C(n,p,R,K)}}\inf_{\mathbb{B}_{R/2}(O)}u.
\]
We finish the proof.
\end{proof}
\medskip

\noindent {\it\textbf{Acknowledgements}}: The authors are supported partially by NSFC grant (No.11731001). The author Y. Wang is supported partially by NSFC grant (No.11971400) and Guangdong Basic and Applied Basic Research Foundation Grant (No. 2020A1515011019). The third author is supported partially by China Postdoctoral Science Foundation Grant (No. 2019M660274).

\bibliographystyle{amsalpha}

\begin{thebibliography}{2}

\bibitem{AA}A. Abolarinwa,  \emph{Gradient estimates for a nonlinear elliptic equation on complete noncompact Riemannian manifold.} J. Math. Inequal. {\bf 12} (2018), no. 2, 391-402.
\bibitem{A} P. Aviles, \emph{Local behavior of solutions of some elliptic equations}, Comm. Math. Physics {\bf 108}(1987),177-192.

\bibitem{CGS} L. Caffarelli, B. Gidas and J. Spruck, \emph{Asymptotic symmetry and local behavior of semilinear elliptic equations with critical Sobolev growth}, Comm. Pure Appl. Math. {\bf 42}(1989), no. 3, 271-297.

\bibitem{Calabi} E. Calabi, \emph{An extension of E. Hopf's Maximum principle with an application to Riemannian geometry}, Duke Math. J. {\bf 25} (1958), 45-56.

\bibitem{MBL}M.-F. Bidaut-Veron and L. Veron, \emph{Nonlinear elliptic equations on compact Riemannnian manifolds and asymptotics of Emden equations},  Invent. math. {\bf 106}  (1991),  489-539.

\bibitem{C-Y} S.-Y. Cheng and S. T. Yau, \emph{Differential equations on Riemannian manifolds and their geometric applications}, Comm. Pure Appl. Math. {\bf 28} (1975), 333-354.

\bibitem{CKS} S.M. Choi, J.-H. Kwon and Y.J. Suh, \emph{A Liouville-type theorem for complete Riemannian manifolds}, Bull. Korean Math. Soc. 35 (1998), no. 2, 301-309.

\bibitem{ding}W.-Y. Ding, \emph{Positive solutions of $u+u^{(n+2)/(n-2)}=0$ on contractible domains}, J. Partial Differential Equations {\bf 2} (1989), no. 4, 83-88.

\bibitem{dn1} W.-Y. Ding and W.-M. Ni, \emph{On the elliptic equation $\Delta u+Ku^{(n+2)/(n-2)}=0$ and related topics.} Duke Math. J. {\bf 52} (1985), 485-506.

\bibitem{dn2} W.-Y. Ding and W.-M. Ni, \emph{On the existence of positive entire solutions of a semilinear elliptic equation},  Arch. Rational Mech. Anal. {\bf 91} (1986), 283-308.

\bibitem{BJ} B. Gidas and J. Spruck, \emph{Global and local behavior of positive solutions of nonlinear elliptic equations}, Comm. Pure Appl. Math. {\bf 34} (1981), 525-598.

\bibitem{GW} Z.-M. Guo and J.-C. Wei, \emph {Hausdorff dimension of ruptures for solutions of a semilinear elliptic equation with singular nonlinearity},  Manuscripta Mathematica  {\bf 120} (2006), 193-209.

\bibitem{KW}J. L. Kazdan, and F. W. Warner, \emph{Existence and conformal deformation of metrics with prescribed Gaussian and scalar curvatures}, Ann. of Math. (2) 101 (1975), 317-331.

\bibitem{J-L} J. Li, \emph{Gradient estimates and Harnack inequalities for nonlinear parabolic and nonlinear elliptic equations on Riemannian manifolds}, J. Funct. Anal. {\bf 100} (1991), 233-256.

\bibitem{LY} P. Li and S. T. Yau, \emph{On the parabolic kernel of the Schr\"odinger operator}, Acta. Math. {\bf 156(3)} (1986), 153-201.

\bibitem{MHL} B.-Q. Ma, G.-Y. Huang and Y. Luo, \emph{Gradient estimates for a nonlinear elliptic equation on complete Riemannian manifolds}, Proc. Amer. Math. Soc. {\bf 146} (2018),  4993-5002.

\bibitem{MW}M. Musso and J. Wei, \emph{Sign-changing blowing-up solutions for supercritical Bahri-Coron's problem}, Calc. Var. Partial Differential Equations {\bf 55}(2016), no.1, 39 pp.

\bibitem{N} E.R. Negrin, \emph{Gradient estimates and a Liouville type theorem for the Schr\"odinger operator}, J. Funct. Anal. {\bf 127}(1995) 198-203.

\bibitem{PQS} P. Pol\'acik, P. Quittner and P. Souplet, \emph{Singularity and decay estimates in superlinear problems via Liouville-type theorems. I. Elliptic equations and systems}. Duke Math. J. {\bf 139}(2007), no. 3, 555-579.

\bibitem{SY} R. Schoen and S.-T. Yau, \emph{Lectures on Differential Geometry}, International Press, Cambridge, MA, (1994).

\bibitem{WZ} W. Wang and P. Zhang, \emph{Some gradient estimates and Harnack inequalities for nonlinear parabolic equations on Riemannian manifolds}, Math. Nachr. {\bf290}(2017), No. 11-12, 1905-1917.

\bibitem{W}J.-Y. Wu, \emph{Gradient estimates for a nonlinear parabolic equation and Liouville theorems}, Manuscripta Math. {\bf 159}(2019), no. 3-4, 511-547.

\bibitem{YYY} Y.-Y. Yang, \emph{Gradient estimates for the equation $\Delta u + cu^{-\alpha}=0$ on Riemannian manifolds}, Acta. Math. Sinica {\bf 26} (2010), 1177-1182.

\bibitem{yau} S.T. Yau, {\it Harmonic functions on complete Riemannian manifolds.} {Comm. Pure Appl. Math.} {\bf 28} (1975), 201-228.

\end{thebibliography}

\end{document}